\newtheorem{thm}{Theorem} %[section]
\newtheorem{lem}[thm]{Lemma}
\newcommand{\td}{\operatorname{td}}
\title{Minimal obstructions for tree-depth:\\ A non-1-unique example}
\author{Michael D. Barrus\thanks{Department of Mathematics, University of Rhode Island, Kingston, Rhode Island 02881, United States; email: \texttt{barrus@uri.edu}} ~and
John Sinkovic\thanks{Department of Combinatorics and Optimization, University of Waterloo, Waterloo, Ontario, N2L 2G1, Canada; email: \texttt{johnsinkovic@gmail.com}}}
\begin{document}
\maketitle

\begin{abstract}
A $k$-ranking of a graph $G$ is a labeling of the vertices of $G$ with values from $\{1,\dots,k\}$ such that any path joining two vertices with the same label contains a vertex having a higher label. The tree-depth of $G$ is the smallest value of $k$ for which a $k$-ranking of $G$ exists. The graph $G$ is $k$-critical if it has tree-depth $k$ and every proper minor of $G$ has smaller tree-depth.

As defined in~\cite{BarrusSinkovic15}, a graph $G$ is 1-unique if for every vertex $v$ in $G$, there exists an optimal ranking of $G$ in which $v$ is the unique vertex with label 1. In \cite{BarrusSinkovic15} and \cite{unpub} the authors showed that several classes of critical graphs are $1$-unique and asked whether all critical graphs have this property. We answer in the negative by demonstrating an infinite family of graphs which are critical but not 1-unique.

\medskip
\emph{Keywords:} Graph minors, tree-depth, vertex ranking
\end{abstract}

\section{Introduction}
The \emph{tree-depth} of a graph $G$, denoted $\td(G)$, is defined as the smallest natural number $k$ such that the vertices of $G$ may be labeled with elements of $\{1,\dots,k\}$ such that every path joining two vertices with the same label contains a vertex having a larger label.  Define a graph $G$ to be \emph{critical} if every proper minor of $G$ has tree-depth less than $\td(G )$.  A critical graph with tree-depth $k$ will be called $k$-critical. 

Given a graph $G$, we will call a labeling of the vertices of $G$ with labels from $\{1,\dots,k\}$ a \emph{feasible} labeling if every path in $G$ between two vertices with the same label ($\ell$, say) passes through a vertex with a label greater than $\ell$. Adopting terminology from previous authors, we call a feasible labeling with labels from $\{1,\dots,k\}$ a \emph{($k$-)ranking of $G$}, and we refer to the labels as \emph{ranks} or \emph{colors} (note that every feasible labeling is a proper coloring of $G$). We call a ranking of $G$ \emph{optimal} if it is a $\td(G)$-ranking.  A graph $G$ is \emph{$1$-unique} if for every vertex $v$ of $G$ there is an optimal ranking of $G$ in which vertex $v$ is the only vertex receiving rank 1.

In the papers \cite{BarrusSinkovic15} and \cite{unpub}, the authors present many examples of $k$-critical graphs and show that they are all 1-unique, conjecturing that all critical graphs are 1-unique.  After further investigation, this turns out not to be the case; the purpose of this note is to demonstrate an infinite family of critical graphs, each of which is not 1-unique. In the next section we begin with some necessary background and results and then present the details of the counterexample.

\section{An infinite family of critical graphs that are not 1-unique}

We begin by recalling a definition and result from~\cite{BarrusSinkovic15}. 

Given a vertex $v$ in a graph $G$, a \emph{star-clique transform at $v$} removes $v$ from $G$ and adds edges between the vertices in $N_G(v)$ so as to make them a clique.  

\begin{thm}\label{thm:starclique}(\cite{BarrusSinkovic15}) Let $v$ be a vertex of a graph $G$, and let $H$ be the graph obtained through the star-clique transform at $v$ of $G$.  Vertex $v$ is $1$-unique in $G$ if and only if $\td(H)<\td(G)$.
\end{thm}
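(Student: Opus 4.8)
The plan is to prove both directions by a single device: a label shift that turns a $\td(G)$-ranking of $G$ in which $v$ is the unique vertex of rank $1$ into a $(\td(G)-1)$-ranking of $H$, and conversely. The conceptual engine is that a new edge $xy$ of $H$ (with $x,y\in N_G(v)$) can be \emph{simulated} by the two-edge path $x$--$v$--$y$ in $G$, together with the observation that a vertex carrying the unique smallest rank can never be the high-rank vertex that certifies feasibility along a nontrivial path.

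For the forward implication, suppose $v$ is $1$-unique in $G$ and fix an optimal ranking $c$ of $G$ using $k=\td(G)$ colors in which $v$ alone has rank $1$. I would restrict $c$ to $V(H)=V(G)\setminus\{v\}$ and subtract $1$ from every label; since every remaining vertex had a label in $\{2,\dots,k\}$, this yields a labeling of $H$ with colors $\{1,\dots,k-1\}$, and it remains only to check feasibility. Given a path $P$ in $H$ between two vertices $a,b$ with the same label, if $P$ uses no new edge then $P$ is already a path of $G$ and we are done. Otherwise, let $x$ and $y$ be respectively the first and last vertices of $P$ (reading from $a$) lying in $N_G(v)$; these are distinct, since $P$ uses a new edge and hence meets $N_G(v)$ at least twice. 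Because a new edge has both endpoints in $N_G(v)$, the portions of $P$ from $a$ to $x$ and from $y$ to $b$ use only edges of $G$, and splicing $x$--$v$--$y$ between them produces a genuine $a$--$b$ path $P'$ of $G$ (here one checks that those two portions are vertex-disjoint from each other and from $v$). Feasibility of $c$ gives a vertex of $P'$ with label larger than that of $a$; that vertex is not $v$, so it lies on $P$. Hence $\td(H)\le k-1<\td(G)$.

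For the reverse implication, suppose $\td(H)<\td(G)=k$ and fix a $(k-1)$-ranking $c'$ of $H$. I would define $c$ on $V(G)$ by $c(v)=1$ and $c(u)=c'(u)+1$ for $u\neq v$. This uses colors $\{1,\dots,k\}$ with $v$ the unique vertex of rank $1$, so as soon as $c$ is shown to be feasible it is automatically optimal and witnesses that $v$ is $1$-unique. Given a path $P$ in $G$ between distinct vertices $a,b$ of equal label (necessarily $a,b\neq v$): if $v\notin P$ then $P$ lies in $G-v$, which is a subgraph of $H$, and feasibility of $c'$ applies directly; if $v$ is an internal vertex of $P$ with path-neighbors $x,y\in N_G(v)$, then replacing $x$--$v$--$y$ by the edge $xy$ of $H$ gives an $a$--$b$ path $P''$ of $H$, feasibility of $c'$ supplies a vertex of $P''$ whose $c'$-label exceeds $c'(a)=c(a)-1$, hence whose $c$-label exceeds $c(a)$, and that vertex lies on $P$ because $V(P'')\subseteq V(P)$.

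The only subtle point, and the part I expect to be the main obstacle, is in the forward direction when $P$ traverses several new edges: one cannot replace each new edge individually by its detour through $v$, since that produces a walk revisiting $v$ rather than a path. The remedy is to route through $v$ exactly once, namely between the first and last vertices of $N_G(v)$ encountered along $P$; verifying that this really yields a path is the one place where a little care is needed. The remaining ingredients — the label shifts, the disjointness bookkeeping, and the automatic optimality in the reverse direction — are routine.
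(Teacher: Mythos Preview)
The paper does not supply a proof of this theorem; it is quoted as a result from~\cite{BarrusSinkovic15} and used as a black box. There is therefore no in-paper argument to compare against.

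Your argument is correct. Both directions go through as written: the label shift $c\mapsto c\pm 1$ is the natural bijection between rankings of $G$ in which $v$ is the unique vertex of rank $1$ and rankings of $H$ using one fewer color, and your path-replacement device (detouring through $v$ exactly once, between the first and last visits to $N_G(v)$) correctly handles the only nontrivial point, namely that replacing each new edge separately would produce a walk rather than a path. The verification that $V(P')\setminus\{v\}\subseteq V(P)$, so that the high-rank vertex guaranteed on $P'$ actually lies on $P$, is exactly the step that makes the forward direction work. One small remark: in the degenerate subcase $a=x$, $b=y$ your path $P'$ is just $a\,v\,b$, and the existence of a higher-ranked vertex on it already contradicts $c(v)=1$; but this simply means that no two neighbors of $v$ share a $c$-label, so the subcase never arises --- your phrasing (``feasibility of $c$ gives a vertex\dots'') absorbs this automatically.
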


%We include the construction from \cite{BarrusSinkovic15}. The construction is as follows: let $H$ be an $s$-critical graph with vertices $v_1, \ldots, v_q$, and let $L_1,\ldots L_q$ be $(r+1)$-critical graphs.  Form a graph $G$ by choosing a vertex $w_i$ from each $L_i$ and identifying $v_i$ and $w_i$ for all $i\in \{1,\ldots,q\}$. (We say that the graphs $L_i$ are \emph{adjoined} at the vertices $v_i$ of $H$.) 
%
%\begin{thm}\label{thm:s+r}(\cite{BarrusSinkovic15})
%The graph $G$ satisfies $\td(G)=r+s$.
%\end{thm}

For any positive integer $k$, define a \emph{$k$-net} to be the graph constructed by attaching a single pendant vertex to each vertex of the complete graph $K_k$. The following fact is a special case of Lemma~2.7 in~\cite{unpub}.

\begin{lem}\label{lem:k-net}
A $k$-net has tree-depth $k+1$.
\end{lem}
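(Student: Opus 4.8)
The plan is to verify the two inequalities $\td(N_k)\le k+1$ and $\td(N_k)\ge k+1$ separately, where I write $N_k$ for the $k$-net, with core clique $v_1,\dots,v_k$ spanning a $K_k$ and pendant vertex $u_i$ attached to $v_i$. For the upper bound I would simply exhibit a $(k+1)$-ranking: assign $v_i$ the rank $i+1$ for $1\le i\le k$ and assign every pendant $u_i$ the rank $1$. The core vertices then receive the $k$ distinct ranks $2,\dots,k+1$, so the only rank that repeats is $1$; and any path between two distinct pendants $u_i,u_j$ must pass through $v_i$, the unique neighbor of $u_i$, whose rank exceeds $1$. Hence this labeling is feasible and $\td(N_k)\le k+1$.

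The lower bound is the substantive half, and I would prove it by induction on $k$, using two standard facts about tree-depth: (i) $\td$ is monotone under taking subgraphs, so any graph containing $K_k$ has tree-depth at least $\td(K_k)=k$; and (ii) for a connected graph $G$ one has $\td(G)=1+\min_{v\in V(G)}\td(G-v)$. The base case $k=1$ is just $N_1=K_2$, with $\td(K_2)=2$. For the inductive step ($k\ge 2$), since $N_k$ is connected it suffices by (ii) to show $\td(N_k-v)\ge k$ for \emph{every} vertex $v$. If $v=u_i$ is a pendant, then $N_k-u_i$ still contains the core $K_k$, so $\td(N_k-u_i)\ge k$ by (i). If $v=v_i$ is a core vertex, then $N_k-v_i$ is the disjoint union of the isolated vertex $u_i$ with a copy of the $(k-1)$-net $N_{k-1}$, so $\td(N_k-v_i)=\max\{1,\td(N_{k-1})\}=k$ by the induction hypothesis. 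Combining the cases with (ii) gives $\td(N_k)=1+k=k+1$.

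The step needing care is that formula (ii) forces us to bound $\td(N_k-v)$ from below uniformly over all $v$, not merely for a convenient choice; the pendant-deletion case is precisely where subgraph monotonicity is invoked, while the core-deletion case is where the induction does its work, through the clean split of $N_k-v_i$ into a $K_1$ together with an $(k-1)$-net. An essentially equivalent, recursion-free phrasing of the lower bound (which may read more transparently): in any $k$-ranking of $N_k$ the clique on $v_1,\dots,v_k$ forces all $k$ ranks $1,\dots,k$ to occur among those vertices, so the necessarily unique vertex of top rank $k$ is some $v_i$; deleting it restricts the ranking to a $(k-1)$-ranking of the disjoint union of $K_1$ and $N_{k-1}$, contradicting the inductive fact that $\td(N_{k-1})=k$.
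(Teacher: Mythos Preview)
Your proof is correct. The paper does not actually supply a proof of this lemma: it simply cites it as a special case of Lemma~2.7 in the companion manuscript~\cite{unpub}, so there is no in-paper argument to compare against. Your self-contained treatment---an explicit $(k{+}1)$-ranking for the upper bound, and an induction via the recursion $\td(G)=1+\min_{v}\td(G-v)$ for the lower bound, splitting into the pendant case (handled by the $K_k$ subgraph and monotonicity) and the core case (reducing to $N_{k-1}$)---is the standard direct argument and is entirely sound. The alternative ``recursion-free'' phrasing you give at the end is also valid, since in any ranking of a connected graph the top label occurs at a unique vertex, and the clique forces that vertex to lie in the core.
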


We now present one more result on tree-depth before describing the family of counterexamples.

\begin{lem}\label{lem: Ka Box K2}
For any positive integer $a$, the graph $K_a \Box K_2$ has tree-depth $\lceil 3a/2\rceil$.
\end{lem}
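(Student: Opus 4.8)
The plan is to establish the two inequalities $\td(K_a\Box K_2)\ge\lceil 3a/2\rceil$ and $\td(K_a\Box K_2)\le\lceil 3a/2\rceil$ separately. Write $G=K_a\Box K_2$ with $V(G)=A\cup B$, where $A=\{u_1,\dots,u_a\}$ and $B=\{v_1,\dots,v_a\}$ each induce a $K_a$ and the remaining edges are exactly the matching edges $u_iv_i$. The basic observation to lean on throughout is that in any ranking $\phi$ of $G$, since $A$ and $B$ are cliques, the $a$ labels on $A$ are pairwise distinct and the $a$ labels on $B$ are pairwise distinct; writing $\phi(A)$ and $\phi(B)$ for the two label sets, $\phi$ uses $|\phi(A)\cup\phi(B)|=2a-|\phi(A)\cap\phi(B)|$ distinct labels. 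So the lower bound reduces to bounding the overlap $|\phi(A)\cap\phi(B)|$ by $\lfloor a/2\rfloor$, and the upper bound reduces to exhibiting a ranking that attains this overlap.

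For the lower bound I will call a label $c$ \emph{shared} if $c=\phi(u_i)=\phi(v_j)$ for some $i,j$; since $u_i\sim v_i$ we must have $i\ne j$, and because $A,B$ are cliques the indices $i,j$ are uniquely determined, so each shared label $c$ yields an ordered pair $(i_c,j_c)$, which I will regard as an arc on the vertex set $\{1,\dots,a\}$. The heart of the argument is the claim that distinct shared labels give \emph{vertex-disjoint} arcs. Two shared arcs cannot share a tail (this would put two labels on one vertex of $A$) or a head (similarly for $B$), so a putative overlap would consist of arcs $(p,q)$ and $(r,p)$ with distinct labels $c,c'$, i.e.\ $\phi(u_p)=\phi(v_q)=c$ while $\phi(u_r)=\phi(v_p)=c'$. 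I then derive a contradiction from two length-two paths: the path $u_r,u_p,v_p$ (a clique edge followed by a matching edge) joins two vertices labeled $c'$ and has $u_p$, labeled $c$, as its only interior vertex, forcing $c>c'$; but the path $u_p,v_p,v_q$ joins two vertices labeled $c$ with sole interior vertex $v_p$, labeled $c'$, forcing $c'>c$. Hence the shared arcs are vertex-disjoint, there are at most $\lfloor a/2\rfloor$ of them, and $\td(G)\ge 2a-\lfloor a/2\rfloor=\lceil 3a/2\rceil$.

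For the upper bound I will write down an explicit ranking. Set $b=\lfloor a/2\rfloor$, and for $t=1,\dots,b$ assign the labels $\phi(u_{2t-1})=\phi(v_{2t})=t$, $\phi(u_{2t})=b+t$, and $\phi(v_{2t-1})=2b+t$; if $a=2b+1$, also set $\phi(u_a)=3b+1$ and $\phi(v_a)=3b+2$. The labels used are then $\{1,\dots,3b\}$ when $a=2b$ and $\{1,\dots,3b+2\}$ when $a=2b+1$, which in both cases is exactly $\lceil 3a/2\rceil$ labels. A quick check shows the assignment is a proper coloring whose only monochromatic pairs are the pairs $(u_{2t-1},v_{2t})$ with $t\le b$. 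To verify the ranking condition I will use that any path from $A$ to $B$ traverses some matching edge $u_mv_m$, and in this assignment at least one of $\phi(u_m),\phi(v_m)$ exceeds $b$; hence every $u_{2t-1}$--$v_{2t}$ path contains a vertex with label $>b\ge t$. Combined with the lower bound, this yields $\td(K_a\Box K_2)=\lceil 3a/2\rceil$.

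The step I expect to be the real obstacle is the vertex-disjointness of the shared arcs in the lower bound. The cheap counting argument --- each label occurs on at most one vertex of $A$ and one of $B$, so a ranking with $k$ labels has at least $2a-k$ shared labels --- only gives $\td(G)\ge a$, and it takes the observation that two ``chained'' shared labels already violate the ranking condition along the paths $u_r,u_p,v_p$ and $u_p,v_p,v_q$ to push the overlap down to $\lfloor a/2\rfloor$. Once the correct pair of paths is isolated, the remaining work (the clique-distinctness bookkeeping, the arithmetic $2a-\lfloor a/2\rfloor=\lceil 3a/2\rceil$, and the verification of the explicit labeling) is routine.
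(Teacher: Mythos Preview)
Your proof is correct, but the lower-bound argument is genuinely different from the paper's. The paper argues structurally via the recursive definition of tree-depth: in any optimal elimination order for $K_a\Box K_2$, one first removes a set $T$ whose deletion disconnects the graph; since $K_a\Box K_2$ has independence number~$2$, the two resulting components are cliques lying inside the two copies of $K_a$, forcing $T$ to hit every matching pair (so $|T|\ge a$) and the larger component to have at least $a-|T|/2$ vertices, whence $\td\ge |T|+(a-|T|/2)\ge 3a/2$. Your argument bypasses the elimination-tree viewpoint entirely and works directly with an arbitrary ranking $\phi$: you bound $|\phi(A)\cap\phi(B)|$ by observing that each shared label determines an arc on $\{1,\dots,a\}$ and that two such arcs sharing a vertex would produce contradictory inequalities along two length-$2$ paths. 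This is more elementary---it needs only the definition of a ranking, not the recursive formula or any cutset analysis---at the cost of a slightly more delicate combinatorial check. The upper-bound constructions are essentially the same idea in different notation: both pick $\lfloor a/2\rfloor$ disjoint matching pairs, label the two ``diagonal'' vertices of each pair with a small shared color, and give all remaining vertices distinct large colors.
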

\begin{proof}
The claim is easily verified for $a \in \{1,2\}$, so suppose $a \geq 3$. Let $V_1$ and $V_2$ denote the disjoint vertex sets of the two induced copies of $K_a$ in $K_a \Box K_2$. We may group the vertices of $K_a \Box K_2$ into pairs $\{u,u'\}$, where $uu'$ is an edge and $u$ and $u'$ are elements of $V_1$ and $V_2$, respectively. Any cutset $T$ in $K_a \Box K_2$ must contain at least one vertex from each such pair, so $|T| \geq a$. Moreover, since $K_a \Box K_2$ has independence number 2, after deleting any cutset the resulting graph has exactly two components, which must be complete subgraphs, the larger of which has at least $a - |T|/2$ vertices. It follows that the tree-depth of $K_a \Box K_2$ is at least $a + |T|/2$, which is at least $\lceil 3a/2\rceil$.

To demonstrate equality, let $T$ be a subset of $V(K_a \Box K_2)$ consisting of $\lfloor a/2 \rfloor$ vertices from $V_1$ and $\lceil a/2 \rceil$ vertices from $V_2$, with no vertex in $T \cap V_1$ adjacent to any vertex in $T \cap V_2$. Label the vertices in $T$ injectively with labels from $\{\lceil a/2 \rceil+1, \dots, \lceil 3a/2\rceil\}$, and in each of $V_1 - T$ and $V_2 - T$, injectively label the vertices with $\{1,\dots,\lceil a/2 \rceil\}$. It is straightforward to verify that this is a feasible labeling using the appropriate number of colors.
\end{proof}

\begin{thm}
For any $n \geq 4$, let $H_n$ be the graph obtained by subdividing (once) all edges incident with a given vertex $v$ of $K_{n}$. The graph $H_n$ is $(n+1)$-critical but not $1$-unique; moreover, $v$ is the only non-1-unique vertex in $H_n$.
\end{thm}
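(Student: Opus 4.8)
Write $U=\{u_1,\dots,u_{n-1}\}$ for the vertex set of the clique that remains after subdividing the edges at $v$, write $w_i$ for the vertex subdividing the edge $vu_i$, and set $W=\{w_1,\dots,w_{n-1}\}$; thus $V(H_n)=\{v\}\cup W\cup U$, the set $U$ induces a clique, and the only remaining edges are $vw_i$ and $w_iu_i$ for $1\le i\le n-1$. The plan is to prove, in turn, that $\td(H_n)=n+1$, that $H_n$ is critical, and that $v$ is the unique non-1-unique vertex of $H_n$.

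For $\td(H_n)\le n+1$, rank each $w_i$ with $1$, rank $v$ with $2$, and rank the vertices of $U$ injectively with $3,\dots,n+1$; this is readily checked to be feasible. The reverse inequality $\td(H_n)\ge n+1$ is the heart of the argument and I expect it to be the main obstacle. I would argue by contradiction using the standard elimination-forest description of tree-depth: if $\td(H_n)\le n$ then, since $H_n$ is connected, there is a rooted tree $F$ of height at most $n$ whose closure contains $H_n$. As $U$ is a clique, its vertices lie on one root-to-leaf path; let $P$ be the path in $F$ from the root to the deepest vertex of $U$, so $P$ contains all of $U$ and $|P|\in\{n-1,n\}$. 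If $|P|=n-1$ then $P$ is exactly $U$, every other vertex is a leaf hanging off $P$, the deepest vertex of $U$ has depth $n-1$, its partner in $W$ must be a leaf at depth $n$, and then $v$ --- which must be comparable to every $w_j$ --- has no valid location. If $|P|=n$ then $P$ contains one extra vertex $r$; $r=v$ is impossible (the deepest vertex of $U$ would then sit at depth $n$ with its $W$-partner forced to be a child), so $r=w_k$ for exactly one $k$. Each $w_j$ with $j\ne k$ is then forced to be a descendant of $u_j$, and $v$ a descendant of $w_k$; comparability of $v$ with every $w_j$ forces every $u_j$ with $j\ne k$ to have depth strictly smaller than $w_k$, and counting how many vertices of $U$ can lie above $w_k$ on $P$ yields a contradiction. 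Hence $\td(H_n)=n+1$.

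For criticality it suffices to show that every single-vertex deletion, single-edge deletion, and single-edge contraction of $H_n$ has tree-depth at most $n$, since every proper minor of $H_n$ is a minor of one of these. Deleting $v$ leaves the $(n-1)$-net, of tree-depth $n$ by Lemma~\ref{lem:k-net}. Most remaining cases follow from the identity $\td(G)=1+\min_{x\in V(G)}\td(G-x)$ for connected $G$ --- after deleting an appropriate vertex one is left with a clique with pendants attached, or with $H_{n-1}$ together with an isolated vertex, handled inductively --- using the facts that $K_m$ with a pendant on every vertex has tree-depth $m+1$ while $K_m$ missing at least one pendant has tree-depth $m$. The one case needing a little care is $\td(H_n-vw_i)\le n$, since no single vertex deletion suffices when $n\ge5$; here one exhibits the explicit feasible labeling that assigns $1$ to every $w_j$, assigns $2$ to both $v$ and $u_i$, and assigns $3,\dots,n$ injectively to the remaining vertices of $U$.

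Finally, I would determine the non-1-unique vertices via Theorem~\ref{thm:starclique}. The star-clique transform of $H_n$ at $v$ is precisely $K_{n-1}\,\Box\,K_2$, the two copies of $K_{n-1}$ being $W$ and $U$ joined by the matching $\{w_iu_i\}$; by Lemma~\ref{lem: Ka Box K2} this graph has tree-depth $\lceil 3(n-1)/2\rceil$, which is at least $n+1=\td(H_n)$ for every $n\ge4$, so $v$ is not 1-unique. By contrast, the star-clique transform at any $w_i$ is the graph obtained from $K_n$ by subdividing all but one of the edges at $v$, and the transform at any $u_i$ is the same graph up to isomorphism; each contains $K_n$ as a minor, hence has tree-depth at least $n$, while deleting $v$ leaves $K_{n-1}$ with $n-2$ pendants, of tree-depth $n-1$, so each such transform has tree-depth exactly $n<\td(H_n)$. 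Thus every vertex of $H_n$ other than $v$ is 1-unique, which finishes the proof. The genuinely delicate point is the lower bound $\td(H_n)\ge n+1$; everything else is careful bookkeeping with the two lemmas, Theorem~\ref{thm:starclique}, and the vertex-deletion recursion.
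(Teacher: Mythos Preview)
Your proof is essentially correct and arrives at the same conclusion, but it diverges from the paper's argument in two places, most notably the lower bound $\td(H_n)\ge n+1$.

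\textbf{Lower bound.} The paper proves $\td(H_n)\ge n+1$ by a short induction: $H_3\cong C_5$ has tree-depth $4$, and for the inductive step one simply observes that deleting \emph{any} vertex of $H_{k+1}$ leaves a graph containing either a $k$-net (if $v$ is deleted) or a copy of $H_k$ (otherwise, after also removing the partner in the other set), so every one-vertex deletion has tree-depth at least $k+1$ and the recursion $\td(G)=1+\min_x\td(G-x)$ finishes. Your elimination-forest argument is a genuinely different and more direct approach, but as written it contains an inaccurate intermediate claim: in the case $|P|=n$ with $r=w_k$, it is \emph{not} true that comparability of $v$ with every $w_j$ forces every $u_j$ ($j\ne k$) to lie strictly above $w_k$. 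What one can show is that the deepest $U$-vertex $p_n$ must be $u_k$ (otherwise its $W$-partner is pushed to depth $n+1$), and that every $u_j$ with $j\ne k$ must be an ancestor of $v$; since $w_k$ is also an ancestor of $v$ and $v\notin P$, this forces $v$ to sit at depth $n$ as a child of $p_{n-1}$, leaving no room for any $w_j$ with $j\ne k$. So your strategy works, but the step you flagged as delicate really is delicate, and the paper's induction sidesteps the case analysis entirely.

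\textbf{Criticality.} The paper handles every edge deletion and contraction by exhibiting an explicit $n$-ranking. You instead route most cases through the vertex-deletion recursion, which is fine. One small correction: your parenthetical that ``no single vertex deletion suffices'' for $H_n-vw_i$ when $n\ge 5$ is not quite right --- deleting some $w_j$ with $j\ne i$ leaves a graph that can be $(n-1)$-ranked (label $u_j$ and the remaining $w$'s with $1$, label $v$ and $u_i$ with $2$, and label $U\setminus\{u_i,u_j\}$ injectively with $3,\dots,n-1$). Your explicit ranking for this case is correct regardless, so the argument stands.

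\textbf{1-uniqueness.} Here you and the paper agree: the star-clique transform at $v$ is $K_{n-1}\Box K_2$, handled by Lemma~\ref{lem: Ka Box K2}, while the transforms at the remaining vertices coincide (up to isomorphism) with the contraction of an $A_n$--$B_n$ edge and hence have tree-depth $n$.
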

\begin{proof}
In the following, let $A_n$ denote the vertices of degree $2$ incident with $v$ in $H_n$, and let $B_n$ denote $V(H_n)-v-A_n$; note that the $n-1$ vertices in $B_n$ form a clique in $H_n$, and each vertex in $B_n$ is adjacent exactly to the other vertices of $B_n$ and to a single vertex in $A_n$ (with each vertex in $A_n$ having a single neighbor in $B_n$).

To see that $\td(H_n) \leq n+1$, injectively label the vertices of $B_n$ with labels $2,\dots,n$, label each vertex in $A_n$ with $1$, and label $v$ with $n+1$. Under this labeling only vertices in $A_n$ receive a common label, and each path joining two vertices in $A_n$ contains a vertex outside $A_n$, which has a higher label than $1$.

For convenience in proving that $\td(H_n) \geq n+1$, we now construct a graph $H_3$ in the same way that $H_n$ is defined for $n \geq 4$; note that $H_3$ is isomorphic to $C_5$. By induction we show that $\td(H_n) \geq n+1$ for all $n \geq 3$). 

Observe that $H_3$ has tree-depth $4$, as desired. Now suppose that for some integer $k \geq 3$ we have $\td(H_k) \geq k+1$. Now consider the result of deleting a vertex from $H_{k+1}$. If the vertex deleted is $v$, the remaining graph is isomorphic to a $k$-net, which by Lemma~\ref{lem:k-net} has tree-depth $k+1$. Deleting any vertex from $A_{k+1}$ or from $B_{k+1}$, along with its neighbor in the other set, leaves a copy of $H_k$, which by our induction hypothesis has tree-depth $k+1$. Thus $\td(H_{k+1}) \geq 1 + \td(H_k) \geq (k+1)+1$, as desired.

We now show that $H_n$ is critical. Note that if $u$ is any vertex in $A_n$, and if $w$ is the neighbor of $u$ in $B_n$, then each of $H_n - uv$ and $H_n-uw$ may be feasibly colored by labeling $v$ and $w$ with $2$, labeling all of $A_n$ with $1$, and injectively labeling the vertices of $B_n - u$ with colors from $3,\dots,n$.

If $w,w'$ are vertices in $B_n$, we feasibly color $H_n - ww'$ by labelling $w$ and $w'$ with $1$, labeling all vertices in $A_n$ with $2$, labeling $v$ with $3$, and injectively labeling the vertices of $B_n - \{w,w'\}$ with colors from $4,\dots,n$.

Contracting an edge of $H_n$ that is incident with a vertex in $A_n$ yields a graph isomorphic to that obtained by adding to $H_{n-1}$ a vertex $w'$ adjacent to the analogous vertex $v$ and to all vertices of $B_{n-1}$; we feasibly color this graph by labeling $w'$ and all vertices in $A_{n-1}$ with $1$, labeling $v$ with $2$, and injectively labeling vertices in $B_{n-1}$ with $\{3,\dots,n\}$.

Contracting an edge $w_1w_2$ of $H_n$, where $w_1,w_2 \in B_n$, yields a graph that can be feasibly colored in the following way: label all vertices of $A_n$ with 1, label the vertex replacing $w_1$ and $w_2$ with 2, label $v$ with 3, and injectively label the vertices of $B_n-\{w_1,w_2\}$ with $\{4,\dots,n\}$. Having shown now that deleting or contracting any edge results in a graph with smaller tree-depth (and, it follows, the same holds if we delete any vertex), we conclude that $G$ is $(n+1)$-critical.

Now by Theorem~\ref{thm:starclique}, $v$ will be 1-unique if and only if performing a star-clique transformation at  $v$ in $H_n$ yields a graph with a lower tree-depth. Observe that a star-clique transformation on $v$ actually yields $K_{n-1} \Box K_2$. By Lemma~\ref{lem: Ka Box K2}, $\td(K_{n-1}\Box K_2) = \lceil \frac{3}{2}(n-1)\rceil$, which is at least $n+1$ for $n \geq 4$, rendering $v$ non-1-unique. Note, however, that a star-clique transformation on any vertex of $A_n$ or $B_n$ has the same effect as contracting an edge between $A_n$ and $B_n$ in $H_n$, which lowers the tree-depth, as we verified above; hence all vertices of $H_n$ other than $v$ are 1-unique.
\end{proof}

%\begin{rk}
%Can we characterize all critical graphs with a single non-1-unique vertex? Maybe use this to understand what's going on with the two properties?
%\end{rk}

\bibliography{treedepth}
\bibliographystyle{elsarticle-num}

\end{document}